\newtheorem{theorem}{Theorem}[section]
 \newtheorem{prop}{Proposition}[section]
\theoremstyle{definition}
 \newtheorem{exam}{Example}[section]
 \newtheorem{dfn}{Definition}[section]
\theoremstyle{remark}
 \numberwithin{equation}{section}
\author{Bishnu Hari Subedi}
\address{
Central Department of Mathematics\\
Institute of Science and Technology\\
Tribhuvan University\\
Kirtipur, Kathmandu, Nepal}
\email{subedi.abs@gmail.com}
\thanks{This research work is supported by a  Ph.D Faculty Fellowship of the University Grants Commission, Nepal and NMS-Nick Simons Fellowship of the Nepal Mathematical Society, Nepal}
\keywords{Holomorphic semigroup, ideals, abundant semigroup, compact topological semigroup, filter and ultra filter}
\subjclass{Primary 30D05, Secondary 58F23}
\begin{document}

\title[A study of Holomorphic Semigroups]{A study of Holomorphic Semigroups}

\begin{abstract}
In this paper,  we investigate some characteristic features of holomorphic semigroups. In particular, we investigate nice examples of holomorphic semigroups whose every left or right ideal includes minimal ideal. These examples are compact topological holomorphic semigroups and examples of compact topological holomorphic semigroups are the spaces of ultrafilters of semigroups. 
\end{abstract}
\maketitle

\section{Introduction} 

Semigroups are very classical algebraic structures with a binary composition that satisfies associative law. It naturally arose from the general mapping of a set into itself. So, a set of holomorphic functions on the complex plane $ \mathbb{C} $ or Riemann sphere $ \mathbb{C}_{\infty} $ or certain subsets thereof naturally forms a semigroup.
We call that any analytic map on $ \mathbb{C} $ or $ \mathbb{C_{\infty}} $ or on certain subsets thereof by a holomorphic function. 
 
\begin{dfn}[\textbf{Holomorphic semigroup and subsemigroup}]
A \emph{holomorphic semigroup} $ S $ is a semigroup of holomorphic functions defined on  $ \mathbb{C} $ or  $\mathbb{C}_{\infty} $ or certain subsets thereof with the semigroup operation being functional composition. A non-empty subset $ T\subseteq S $ is a \emph{subsemigroup} of $ S $ if $ f \circ g \in T $ for all $ f, g \in S $. 
\end{dfn}  
Let 
\begin{equation}\label{hs}
\mathscr{F} =\{f_{\alpha}: f_{\alpha} \; \text{is a holomorphic function  for all}\; \alpha \in \Delta\}, 
\end{equation}
where index set $ \Delta $  is allowed to be infinite in the case of discrete semigroups and uncountable in the case of continuous semigroups.  In this sense, a holomorphic semigroup is a set $ S$ of holomorphic functions from $\mathscr{F}$ such that $ f_{\alpha+ \beta}(z) = f_{\alpha}(f_{\beta}(z)) $ for all $ z \in \mathbb{C} $ or  $ \mathbb{C_{\infty}} $ or certain subsets thereof and for all $ \alpha, \beta, \alpha+\beta \in \Delta $.  If $ \Delta \subseteq \mathbb{N} $ then $ S $ is called \emph{one parameter discrete semigroup} and such semigroup consists all iterates $f^{n}, n \in \mathbb{N}  $ of the function $ f $. If $ \Delta = (0, T), T >0 $, an interval of the real line $ \mathbb{R} $, then $ S $ is called \emph{one parameter continuous semigroup} and the iterates $ f^{t} $ with $ t \in (0, T), T > 0 $ is a fractional iterates of $ f $. 

We are more interested in special holomorphic semigroups  whose each element can be expressed as a finite composition of certain  holomorphic functions. More formally, such a semigroup is defined as follows:
\begin{dfn} [\textbf{Holomorphic semigroup generated by holomorphic functions}]
Let $\mathscr{F}$ be a family of holomorphic functions as defined in \ref{hs}.    A holomorphic semigroup $ S $ generated by  $\mathscr{F} $ is a semigroup  of all elements that can be expressed as a finite composition of elements in $ \mathscr{F}$. We denote such holomorphic semigroup by 
$$ 
S  =\langle f_{\alpha} \rangle _{\alpha \in \Delta}\;   \text{or simply}\; S =\langle f_{\alpha} \rangle. 
$$ 
\end{dfn}
Holomorphic semigroup $ S $ is said to be a \textit{rational semigroup} or a \textit{transcendental semigroup} depending on whether $\mathscr{F}$ is a collection of rational functions or transcendental entire functions. In particular, $ S $ is said to be a \textit{polynomial semigroup} if $\mathscr{F}$ is a collection of polynomial functions. The transcendental semigroups or polynomial semigroups are also called \textit{entire semigroups}.  The holomorphic semigroup $S$ is \textit{abelian} if 
  $$
 f_\alpha \circ f_\beta  =f_\beta \circ f_\alpha 
 $$  
for all generators $f_{\alpha}$, $f_{\beta}$  of $ S $. 
Indeed, the semigroup $S =\langle f_{\alpha} \rangle $ is the intersection of all semigroups containing $\mathscr{F} $ and it is a least semigroup containing $\mathscr{F}$. 
Note that each $f \in S$ is constructed through the composition of finite number of functions $f_{\alpha}$ for $\alpha \in \Delta $. That is, 
$$
f =f_{\alpha_1}\circ f_{\alpha_2}\circ \cdots\circ f_{\alpha_m}
$$ 
for some $ m \in \mathbb{N} $ and $ \alpha_{1}, \alpha _{2}, \ldots \alpha_{m} \in \{1,2, \ldots, m\}$. 
A semigroup generated by finitely many holomorphic functions $f_{j}, (j = 1, 2, \ldots,  n) $  is called a \textit{finitely generated  holomorphic semigroup} and we write 
$$S= \langle f_{1},f_{2},\ldots,f_{n} \rangle.
$$
If $S$ is generated by only one holomorphic function $f$, then $S$ is called a \textit{cyclic holomorphic semigroup},  and we write 
 $$
 S = \langle f\rangle.
 $$ In this case, each $g \in S$ can be written as $g = f^n$, where $f^n$ is the nth iterate of $f$ with itself.  We say that $S = \langle f\rangle$ is a \textit{trivial semigroup}.
 
 \begin{exam}\label{hsex}
 Let $ S $ be a set consisting all of powers $ z $ which are either all powers of 2 or all powers of 3 or product of all powers of 2 and 3. Then $ S $ forms a semigroup under functional composition. It is a finitely generated polynomial semigroup generated by two polynomials $ z \to z^{2} $  and $ z \to z^{3} $. In this case, $ S =\langle z^{2}, z^{3} \rangle $. 
 \end{exam}

 \begin{exam}\label{hsex1}
 Let $ \alpha \in \mathbb{C} $ such that \textbf{Re} $ \alpha \geq 0 $. For any $ k \in \mathbb{N} $, the function 
 $$
 f_{k}(z) = e^{-\alpha k}z \;\; \text{for all} \;\; z \in\mathbb{C}
 $$
is holomorphic in the complex plane $ \mathbb{C} $ and so $ S = \{f_{k}: k \in \mathbb{N}\}$ is a holomorphic (in particular, entire) semigroup. 
\end{exam}
The particular examples of holomorphic semigroup given in Example \ref{hsex1} are 
$$ 
S = \{e^{-k} z: k \in \mathbb{N}\}, 
$$
a dilation semigroup,  and 
$$ S = \{e^{ik} z: k \in \mathbb{N}\}, 
$$ 
a rotation semigroup. Note that semigroup of  Example \ref{hsex1} is a linear action on $ \mathbb{C} $ or certain subsets thereof. That is, each one parameter semigroup (discrete or continuous) which is a linear action has the form 
$$ 
S = \{f_{k}: k \in \mathbb{N} \}
$$
 where $ f_{k}(z) = e^{-\alpha k}z  $ for some $ \alpha \in \mathbb{C} $ and  \textbf{Re} $ \alpha \geq 0 $. 
 
 \begin{exam}\label{hsex2}
If we choose index set $ \Delta $, a set of positive rational numbers $ \mathbb{Q^{+}} $ in Example \ref{hsex1}, then each function 
$$
 f_{r}(z) = e^{-\alpha r}z \;\; \text{for all} \;\; z \in\mathbb{C}\; \text{and}\; r \in \mathbb{Q^{+}}
 $$
 for some $ \alpha \in \mathbb{C} $ such that \textbf{Re} $\alpha \geq 0 $, can be written as a finite composition of $ f_{t}(z) = e^{-\alpha t}z $ with $ t \in \mathbb{Q^{+}} $. Hence $ S = \langle f_{t} \rangle _{t \in \mathbb{Q^{+}}}$ is a holomorphic semigroup generated by the set $ \{ f_{t} : t \in \mathbb{Q^{+}}\} $.
\end{exam}
Note that if we choose  $ \Delta =  \mathbb{R^{+}} \cup \{0\}$, the set of non-negative real numbers, then the semigroup of Example \ref{hsex1} is a (semi) flow on $ \mathbb{C} $. Such a semigroup is also an example of one parameter continuous holomorphic semigroup. That is, in general, if the index set of holomorphic semigroup is a set of non-negative real numbers, then the semigroup is known as \textit{flow} or \textit{one parameter continuous semigroup}. If $ \Delta =(-R, R),\; R>0 $  (a real number), then $ S $ is a group which is called \textit{one parameter holomorphic group} and each element $ f_{t} $ in $ S $ is an biholomorphic mapping (automorphism) of $ \mathbb{C_{\infty}} $ whose inverse is $ f_{-t} $.  

\begin{exam}
Define a function ${f_t}, \;  t \in \mathbb{R}$, by 
$$
f_t(z) =  \dfrac{z + \tan ht}{z\tan ht + 1 } .
$$
It is easy to see that for each $ t\in \mathbb{R}, \; f_{t} $ is an automorphism,  Hence, one parameter holomorphic group $S =\{f_t : t \in \mathbb{R}\}$ is a flow of automorphisms. 
\end{exam}

\section{Ideal Theory of holomorphic semigroups}

Unless otherwise stated, we may assume onward that $ S $ is a discrete holomorphic semigroup. There are certain subsets of semigroups with a stronger closure property rather than that of subsemigroups. 

\begin{dfn}[\textbf{Left ideal, right ideal and two sided ideal}]
Let $ I $ be a non-empty subset of  holomorphic semigroup $ S $.  We say $ I  $ is a left ideal (or right ideal) of $ S $ if $ f \circ h \in I $ (or $ h \circ f \in I $) for all $ f \in S $ and $ h \in I $, that is, $ SI \subset I $ (or $ IS \subset I) $. We say $ I $ is two sided ideal (or simply ideal) if it both left and right ideal. 
\end{dfn}
Note that if $ S $ is an abelian semigroup, then the notions of left ideal, right ideal, and two sided ideal coincide. 
For any non-empty  subset $ K $ of a holomorphic semigroup $ S $,  the sets 
$$
SK =\{f\circ g: f \in S, g \in K \} = \bigcup_{g \in K}S \circ g,
$$
$$
KS =\{g\circ f: f \in S, g \in K \} = \bigcup_{g \in K}g \circ S,
$$
and 
$$
SKS =\{f\circ g \circ h: f, h \in S, g \in K \} = \bigcup_{g \in K}S \circ g \circ S
$$
are respectively left, right and two sided ideals. Likewise, for any $ g \in S $, the sets $ S \circ g $, $ g \circ S $ and $ S \circ g \circ S $ are respectively left, right and two sided ideals.
In general,  $ g $ may not be in $ g \circ S $ ( or $S\circ g  $ or $ S \circ g \circ S $) for each $ g \in S $.  If it happens to be in $ g \circ S $ ( or $S\circ g  $ or $ S \circ g \circ S $), then $ g = g \circ f $  (or $ g = f \circ g $ or $ g = f \circ g \circ h$) for some $ f, h \in S $. In this case, $ g \circ S $ (or $S \circ g $ or $S\circ g \circ S  $) is a smallest right (or left or two sided) ideal containing $ g $, which is a right (or left or two sided) ideal generated by $ g $.  Otherwise, $ g \circ S $ (or $g \circ S  $ or $S\circ g \circ S  $) is said to be quasi-generated by $ g $. It is obvious that the union of any non-empty family of left (or right or two sided) ideals of  $ S $ is a left (or right or two sided) ideal of $ S $. On the basis of some topological structure of the complement of $ g \circ S $ in $ S $, we can define the following types of holomorphic semigroup. Recall that a space $ X $ is compact if every open covering of $ X $ contains a finite subcover. 
\begin{dfn}[\textbf{F-semigroup and C-semigroup}]\label{2.3}
Let $ S $ is a holomorphic semigroup and $ g \in S $. Then we say
\begin{enumerate}
\item $ S $ is a  F-(right) semigroup if $ S -  g \circ S   $ is finite;
\item $ S $ is a C-(right) semigroup if $ S -  g \circ S   $ is relatively compact (that is, $ \overline{S -  g \circ S}   $ is compact in $ S $).
\end{enumerate}
\end{dfn}
Analogously, we can define F-(left) semigroup  and C-(left) semigroup of any holomorphic semigroup $ S $. We say only F-semigroup and C-semigroup onward for such holomrphic semigroup $ S $ on the assumption that left/right is clear from the context.  For example, holomorphic semigroup of Example \ref{hsex1} is both F-semigroup and C-semigroup and that of Example \ref{hsex2} is a C-semigroup.  The holomorphic trivial semigroup is also both F-semigroup and C-semigroup. Every (semi) flow, that is, one parameter continuous semigroup is a C-semigroup.  

There are certain type of left (or right) ideals which are connected to two sided ideals. That is, on the basis of such ideals of holomorphic semigroups, we can construct two sided ideals. This ideal structure is defined as follows.
\begin{dfn}[\textbf{Minimal left (or right) ideal}]
A left (or right) ideal $ M $ of holomorphic semigroup $ S $ is minimal if for every left (or right) ideal $ I $ of $ S $ such that $ I \subseteq M $, we have $ M =I $. 
\end{dfn}
Note that minimal left (or right) ideal of $ S $ may be empty. If it is non-empty for a certain holomorphic semigroup $ S $, then for every $ f \in M $, there must be $ M \circ f= M $ (or $ f \circ M= M $)  and $ S \circ f= M $ (or $ f \circ S= M $). Also, a minimal left (or right) ideal is always contained in every two sided ideal of $ S $.  We can also make two sided ideal by the help of minimal left (or right) ideals. For any holomorphic semigroup $ S $, let us define
$$ 
K(S) = \bigcup \{M : M \; \text{is a minimal left (or right) ideal of}\; S\}
$$
Since $ K(S) $ is non-empty if and only if $ S $ has at least one minimal left (or right) ideal and in such a case it is itself a minimal left (or right) ideal. So, as stated above,  it is contained in every two sided ideal of $ S $. Since for all $ f \in S $ 
$$ 
K(S)\circ f = \bigcup \{M\circ f : M \; \text{is a minimal left (or right) ideal of}\; S\} \subseteq K(S)
$$
So, $ K(S) $ is  also left (or right) ideal as well. From this discussion, we can conclude the following result. 

\begin{prop}[\textbf{Example of minimal two sided ideal}]
For any holomorphic semigroup $ S, \; K(S) $ is a minimal two sided ideal of $ S $ if it is non-empty. 
\end{prop}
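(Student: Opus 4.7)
The plan is to establish two facts: (i) $K(S)$ is a two-sided ideal of $S$, and (ii) every two-sided ideal of $S$ contains $K(S)$, which then immediately yields minimality. Both are foreshadowed in the discussion preceding the statement, so the task is to fill in the details honestly.

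For (i), $K(S)$ is a union of minimal left ideals and hence automatically a left ideal of $S$. To see that it is also a right ideal, I would fix $f \in S$ and a minimal left ideal $M$, and show that $M \circ f$ is itself a minimal left ideal; taking unions then gives $K(S) \circ f \subseteq K(S)$, as the text already hints. That $M \circ f$ is a left ideal follows from $S \circ (M \circ f) = (S \circ M) \circ f \subseteq M \circ f$. For minimality, I would take any non-empty left ideal $L \subseteq M \circ f$ and consider the preimage set $L' = \{h \in M : h \circ f \in L\}$. Using that $L$ is a left ideal of $S$ and $S \circ M \subseteq M$, one checks that $L'$ is a non-empty left ideal of $S$ contained in $M$, so $L' = M$ by minimality of $M$, which in turn forces $L = M \circ f$.

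For (ii), let $I$ be any two-sided ideal of $S$ and $M$ any minimal left ideal. Picking $f \in I$ and $m \in M$, the element $f \circ m$ lies in $I$ (since $I$ is a right ideal) and in $M$ (since $M$ is a left ideal), so $M \cap I$ is non-empty. It is a left ideal as the intersection of two left ideals, and sits inside $M$, so minimality gives $M \cap I = M$, i.e.\ $M \subseteq I$. Taking unions over all minimal left ideals shows $K(S) \subseteq I$. Combined with (i), this is exactly what it means for $K(S)$ to be a minimal two-sided ideal.

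The main obstacle is the minimality half of step (i): the left/right ideal computations and the membership argument in (ii) are essentially bookkeeping, but the verification that $M \circ f$ inherits minimality from $M$ is where one must carefully exploit the translation trick via the preimage set $L'$, which is the only nontrivial structural ingredient in the argument.
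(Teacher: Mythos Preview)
Your proposal is correct and follows precisely the outline the paper sketches in the paragraph preceding the proposition (that $K(S)$ is a left ideal as a union of minimal left ideals, that $K(S)\circ f\subseteq K(S)$ via the translated ideals $M\circ f$, and that every two-sided ideal absorbs each minimal left ideal); the paper itself does not spell out these details but simply refers to \cite[Theorem 2.9]{berg2}. Your verification that $M\circ f$ is again minimal via the preimage set $L'$ and your intersection argument $M\cap I=M$ are the standard steps one finds in the cited source, so there is nothing to correct.
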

\begin{proof}
See for instance in {\cite[Theorem 2.9] {berg2}}. 
\end{proof}

We can define a special type of holomorphic semigroup $ S $ where $ K(S) $ is non-empty. This type semigroup has some special features such as every left (or right) ideal of $ S $ include minimal one and every left (or right) ideal of $ S $ contains a special element which is called an idempotent.  Recall that an element $ e \in S $ is an \textit{idempotent} if $ e \circ e =e $. 
\begin{dfn}[\textbf{Abundant semigroup}]
A holomorphic semigroup $ S $ is abundant if every left (or right) ideal of $ S $ includes a minimal one and every minimal left (or right) ideal contains an idempotent element. 
\end{dfn}
It is obvious that $ K(S) \neq \emptyset $ if $ S $ is abundant.
There is a topologically significant nice examples of abundant semigroups which can be defined as follows.
\begin{dfn}[\textbf{Compact right topological holomorphic semigroup}]\label{2.4}
Let $ S $ be a holomorphic semigroup and $ g\in S $. 
\begin{enumerate}
\item We define a right translation map  $ F_{g}: S \to S$ with respect to $ g $  by $F_{g}(h) =h \circ g  $ for all $ h \in S $. 
\item We define a compact holomorphic right topological semigroup  by the pair $ (S, \tau) $, where $ \tau  $ is a topology on $ S $ such that the space $ (S, \tau) $ is compact and Hausdorff and right translation map $ F_{g} $ for every $ g \in S $ is continuous with respect to $ \tau $. 
\end{enumerate}
\end{dfn}
Note that a left translation  map and a compact holomorphic left topological semigroup are defined similarly. Also note that in a compact right topological semigroup, we do not require that left translation maps are continuous. We say only translation map and compact holomorphic topological semigroup if left or right is clear from the context.  

\begin{exam} \label{hsex3}
Let $ S = \{ f_{t}:  t \in \mathbb{Q^{+}} \cup \{0\} \} $ be a holomrphic semigroup, where $ f_{t} $ is a function of Example \ref{hsex2} for all $ t \in \mathbb{Q^{+}} \cup \{0\} $. The collection of all  subsets of $ S $ forms a topology $ \tau $ on $ S $ and hence it is also open cover of $ S $. There are some finite number of elements in $ \tau $ that can cover $ S $. Hence space $ (X, \tau) $ is compact and Hausdorff as well. Therefore,  this semigroup is compact holomrphic topological semigroup and hence abundant.  
\end{exam}
Note that in the holomorphic semigroup $ S $ of  Example \ref{hsex3}, there is an element $ f_{0}(z) = e^{-\alpha 0}z$ such that $ f_{0}\circ f_{0} = f_{0} $ which an idempotent by definition.  Therefore, unless holomrphic semigroup in general, abundant semigroup and in particular,  compact holomorphic topological semigroup do have idempotents. 

\begin{theorem}[\textbf{Idempotents exist for holomorphic semigroup}]  \label{th2.1}
Let $ S $ be a compact holomorphic topological semigroup. Then there is an element $e \in  S $ such that $ e \circ e = e $. 
\end{theorem}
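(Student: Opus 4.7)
The plan is to run the standard Ellis--Numakura-type argument: use Zorn's lemma to produce a minimal non-empty closed subsemigroup of $S$, then extract an idempotent from it. First I would consider the family
\[
\mathcal{T} = \{T \subseteq S : T \neq \emptyset,\ T \text{ closed in } S,\ T \circ T \subseteq T\},
\]
ordered by reverse inclusion. It is non-empty, since $S \in \mathcal{T}$. For any chain $\{T_\alpha\}$ in $\mathcal{T}$, the $T_\alpha$ are closed subsets of the compact Hausdorff space $S$ and have the finite intersection property, so $T_\infty = \bigcap_\alpha T_\alpha$ is non-empty; it is obviously closed and closed under $\circ$, hence belongs to $\mathcal{T}$ and is an upper bound for the chain in the reverse-inclusion order. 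Zorn's lemma then yields a minimal element $T \in \mathcal{T}$.

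Next I would fix an arbitrary $e \in T$ and study $T \circ e = F_e(T)$, where $F_e$ is the right translation map of Definition \ref{2.4}. Since $T \subseteq S$ is closed and $S$ is compact, $T$ is compact; because $F_e$ is continuous and $S$ is Hausdorff, $T \circ e$ is a compact, hence closed, subset of $T$. It is also a subsemigroup, since for $t_1, t_2 \in T$ associativity gives
\[
(t_1 \circ e) \circ (t_2 \circ e) = t_1 \circ (e \circ t_2) \circ e \in T \circ e,
\]
using $e \circ t_2 \in T$. Thus $T \circ e \in \mathcal{T}$ and $T \circ e \subseteq T$, so by minimality $T \circ e = T$. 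In particular there exists some element $t \in T$ with $t \circ e = e$, so the set $E = \{t \in T : t \circ e = e\}$ is non-empty.

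Finally I would apply minimality a second time to $E$. The set $E = T \cap F_e^{-1}(\{e\})$ is closed in $T$, since $F_e$ is continuous and $\{e\}$ is closed in the Hausdorff space $S$. It is a subsemigroup: if $t_1, t_2 \in E$ then
\[
(t_1 \circ t_2) \circ e = t_1 \circ (t_2 \circ e) = t_1 \circ e = e.
\]
So $E \in \mathcal{T}$ with $E \subseteq T$, and minimality of $T$ forces $E = T$. Since $e \in T = E$, we conclude $e \circ e = e$, as required.

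The main point to watch is the subsemigroup verification for $T \circ e$: it is crucial that the continuity hypothesis is on the right translations $F_e$ (not both sides), since we use only the continuity of $F_e$ to get that $T \circ e$ is closed, while the subsemigroup property is obtained purely algebraically from associativity and the fact that $e \in T$. Everything else is a routine compactness/Zorn argument.
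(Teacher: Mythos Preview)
Your argument is correct and follows exactly the paper's approach: apply Zorn's lemma to the family of non-empty closed subsemigroups (ordered by reverse inclusion) to obtain a minimal one $T$, then use minimality twice, first on $T\circ e=F_e(T)$ and then on $E=\{t\in T: t\circ e=e\}=T\cap F_e^{-1}(\{e\})$, to conclude $e\circ e=e$. Your write-up is in fact cleaner than the paper's sketch, which contains the stray claim ``$M=M\circ e=\{e\}$'' that is not needed (and not justified); the essential logic is identical.
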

This theorem can be proved as a standard application of Zorn's lemma from set theory. It states that \textit{if every chain $ \mathscr{C} $ in a partially ordered set $ (S, \leq ) $ has upper bound in $ S $, then $ (S, \leq ) $ has a maximal element}. Note that partial ordered set is a system consisting of non-empty set $ S $ and a relation denoted by $ \leq $ satisfying the properties of anti-symmetry, reflexivity and transitivity.  A chain $ \mathscr{C} $ in a partial ordered set $ (S, \leq ) $ is subset of $ S $ such that for every $ x, y \in  \mathscr{C}$, either $ x \leq y $ or $ x \geq y $. An element $ m \in S $ is a maximal element of $ (S, \leq ) $ if $ m \leq x $ for $ x \in S $ implies $ m =x $. 

\begin{proof}[Sketch of the Proof of Theorem \ref{th2.1}]
The proof of this theorem follows via the following two facts:\\
\textit{Fact 1}: $ S $ has a minimal close subsemigroup. \\
Let $ \tau $ be a family of all closed subsemigroups  of semigroup $ S $. Then $ \tau \neq \emptyset $ and it is a topology of closed sets partially ordered by the reverse inclusion. Let $ \mathscr{C} \subset \tau$ be a chain in $ (\tau, \supseteq) $. Since $ S $ is compact and $ \mathscr{C} $ has the finite intersection property. So, $ \cap_{T \in \mathscr{C}} T$ is non-empty and serves as a least upper bound of $ \mathscr{C} $. Then by Zorn's lemma, $ \tau $ has a maximal element $ M $ (say) where $M \subseteq \cap_{T \in \mathscr{C}} T $. In reality of this context, $ M $ is minimal closed subsemigroup of $ S $.  \\
\textit{Fact 2}: If $ e \in M $, then $ M =\{e\}$ and $ e $ is an idempotent. \\
We can consider two cases of the proof of this fact 2.\\
\textit{Case 1}: We prove $ M= M \circ e =\{e\}  $. Let $ e \in M $. Then $ M \circ e $ is a subsemigroup of $ M $. The map $F_{e}: M \to M \circ e,\; F_{e} (h)\to h \circ e $ is a right translation map of topological holomorphic semigroup $ S $ restricted to $ M $. Then it is continuous. Since $ M $ is compact so $ M \circ e $ is also compact as a image of compact set under continuous map $F_{e}  $. Since by fact 1, $ M $ is minimal and $ M \circ e $ is non-empty, we must $ M= M \circ e =\{e\}  $. This proves that $ e $ is an idempotent. \\
\textit{Case 2}: We prove $ N =\{f \in M: f \circ e = e\} = M$.  By the construction of set $ N $, it is subset of $ M $ and closed under functional composition. This shows that $ N $ is a subsemigroup of $ M $. By the case 1, $ e \in M =M \circ e $ can be written as $ e = f \circ e $ for some $ f \in M $. This shows that $ e \in N \neq \emptyset $. Finally, $ N $ can be written as the intersection of closed subsets of $ M $ and $ F_{e}^{-1}\{\{e\}\} $. This proved $ N $ is a non-empty closed subsemigroup of $ M $. Then as in the case 1, $ N =\{e\} $. 
\end{proof}

Since every left (or right) ideal is a subsemigroup of $ S $. So, from the proof of this theorem \ref{th2.1}, we can say the following important facts for every compact holomorphic topological semigroup $ S $. 
\begin{enumerate}
\item Every left (or right) ideal of the form $ S \circ f $ (or $ f \circ S $) is the image of continuous right translation map $ F_{f}(g) =g \circ f $ for all $ g \in S $. So $ S \circ f $ (or $ f \circ S $) is closed in $ S $. 
\item For every left (or right) ideal $ I $ of $ S $, we have $ S \circ f \subset I $ (or $ f \circ S \subset I) $. Thus every left (or right) ideal contains closed ideal. 
\item For every minimal left (or right) ideal $ M $ of $ S $, we have $ M =S\circ f $ (or $ M =f\circ S $ for all $ f \in M $. This shows that every minimal left (or right) ideal of $ S $ is closed. 
\item Every left (or right) ideal contains an idempotent element.
\item Let $ I \subset J $, where $ J $ is an arbitrary  left (or right) ideal and $ I$ is a closed left (or right) ideal of $ S $. By Zorn's lemma, there is a minimal element $ L $
of $ (\mathcal{I}, \subseteq) $, where $ \mathcal{I}$ is the family of closed left (or right) ideals contained in $ I $. This shows $ L $ is a minimal left (or right) ideal of $ S $. That is, every left  (or right) ideal $ S $ contains minimal left (or right) ideal. 
\end{enumerate}
From all of above facts, we may conclude the following result as well. 
\begin{theorem}[\textbf{An example of abundant semigroup}]
Every compact holomorphic topological semigroup is abundant.
\end{theorem}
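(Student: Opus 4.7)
The plan is to verify the two conditions in the definition of abundance separately: first, that every left (or right) ideal of $S$ contains a minimal one; and second, that every minimal left (or right) ideal carries an idempotent. Both will be derived from Theorem~\ref{th2.1} together with the closedness of the principal ideals $S\circ f$ already recorded in the facts following that theorem. For concreteness I will write the argument for left ideals; the right-hand case is symmetric, based on continuity of the corresponding translations.

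For the first condition, fix a left ideal $J\subseteq S$ and consider the family $\mathcal{I}$ of closed left ideals of $S$ contained in $J$, partially ordered by reverse inclusion. The family is non-empty: for any $f\in J$ the principal left ideal $S\circ f$ lies in $J$, and being the image of $S$ under the continuous right translation $F_{f}$ it is compact, hence closed. To apply Zorn's lemma I would take an arbitrary chain $\mathscr{C}\subseteq\mathcal{I}$, note that $\mathscr{C}$ has the finite intersection property, and use compactness of $S$ to conclude that $\bigcap_{T\in\mathscr{C}}T$ is non-empty; this intersection is itself a closed left ideal contained in $J$ and therefore an upper bound of $\mathscr{C}$ in $(\mathcal{I},\supseteq)$. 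Zorn then supplies a maximal element $L\in(\mathcal{I},\supseteq)$, and the final sub-step is to verify that $L$ is minimal as a left ideal of $S$, not merely among closed ones: any strictly smaller left ideal $I'\subsetneq L$ contains some $g$, hence the principal ideal $S\circ g\subseteq I'$, which by continuity of $F_{g}$ on compact $S$ is again closed, giving a strictly smaller member of $\mathcal{I}$ and contradicting maximality.

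For the second condition, let $M$ be a minimal left ideal of $S$. Minimality forces $M=S\circ g$ for every $g\in M$, so $M$ is closed in $S$, hence compact in the subspace topology and Hausdorff. Moreover $M$ is closed under composition, and each right translation $F_{g}$ restricted to $M$ remains continuous, so $(M,\tau|_{M})$ is itself a compact right topological holomorphic semigroup in the sense of Definition~\ref{2.4}. Theorem~\ref{th2.1} applied to $M$ therefore produces an element $e\in M$ with $e\circ e=e$, and the abundance of $S$ is established.

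The only real point of care, and thus the main obstacle, is the transition from \emph{minimal among closed left ideals} to \emph{minimal among all left ideals} at the end of the first step; this is what forces one to use the observation that every left ideal absorbs closed principal ideals $S\circ g$, and it is the single place where the compact topological hypothesis is doing genuine work beyond what is already packaged inside Theorem~\ref{th2.1}.
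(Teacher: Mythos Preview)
Your proof is correct and follows essentially the same route as the paper: Zorn's lemma applied to the family of closed left ideals inside a given ideal (using compactness and the fact that each $S\circ f$ is a closed left ideal) yields a minimal left ideal, and the idempotent in a minimal left ideal is obtained by applying Theorem~\ref{th2.1} to that ideal viewed as a compact right topological semigroup in its own right. You are in fact more explicit than the paper about the passage from ``minimal among closed left ideals'' to ``minimal among all left ideals,'' a point the paper's fact~(5) leaves implicit.
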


There is a close connection between minimal left ideals and certain idempotents (minimal idempotents). If we suppose a set of idempotents of arbitrary holomorphic semigroup by $ E(S) $, then it may be empty but it is non-empty for abundant holomorphic semigroup. Minimal idempotent can be defined in a partial order relation on $ E(S) $. 

\begin{dfn}[\textbf{Dominated element of set of idempotents}]
Let $ E(S) $ be a set of idempotents of (abundant) holomorphic semigroup $ S $.  We say $ e $ is domonated by $ f $ on  $ E(S) $ and write
$$
e \leq f\;\; \text{if and only if}\;\; e \circ f =f \circ e =e\;\; \text{for all}\;\;  e, f \in E(S). 
$$
\end{dfn}
This is clearly partial order relation on $ E(S) $. That is, for all $ e, f, g \in E(S) $, we have
\begin{enumerate}
\item $ e \leq e $.
\item $ e \leq f $ and $ f \leq e  \Longrightarrow e =f$. 
\item $ e \leq f $ and $ f \leq g  \Longrightarrow e\leq g$. 
\end{enumerate} 
An element $ e \in E(S) $ is \textit{minimal} if there is no element of $ E(S) $ strictly less than $ e $. That is, if $ h \leq  e $, then $ h =e $ for all $ h \in E(S) $. In a abundant holomorphic semigroup, minimal idempotents are tightly connected to minimal left (or right) ideals. 
\begin{theorem} [\textbf{A connection between minimal ideal and minimal idempotant}]\label{2.4}
Let $ S $ be a holomorphic abundant semigroup and $ e \in E(S) $. Then 
\begin{enumerate}
\item If $ L \subseteq S\circ e $ (or $ L \subseteq e\circ S $) is a minimal left (or right) ideal, then there is some idempotent $ f \in L $ such that $ f \leq e $.
\item $ e $ is a minimal idempotent if and only if $ e \in L $ for some minimal left (or right) ideal $ L $, that is, if and only if $ e \in K(S) $.
\item $ e $ is minimal idempotent if and only if the left (or right) ideal $ L =S \circ e $ (or $ L =e \circ S $) is  minimal.
\item There is some minimal idempotent $ f $ such that $ f \leq e $. 
\end{enumerate}
\end{theorem}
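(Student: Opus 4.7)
The plan is to extract (1) as the crucial computational step, after which (2)--(4) follow by repeatedly using abundance of $S$ together with the standing fact that a minimal left ideal $L$ containing an element $x$ must equal $S\circ x$.

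For (1), let $L\subseteq S\circ e$ be a minimal left ideal. Since $S$ is abundant, $L$ contains some idempotent $g$. Writing $g = s\circ e$ with $s\in S$ and using $e\circ e = e$ gives $g\circ e = s\circ e\circ e = s\circ e = g$. I would then take $f = e\circ g$: since $L$ is a left ideal and $g\in L$, we have $f\in L$, and the computations $f\circ f = e\circ g\circ e\circ g = e\circ g\circ g = e\circ g = f$, $f\circ e = e\circ g\circ e = e\circ g = f$ and $e\circ f = e\circ e\circ g = e\circ g = f$ show that $f$ is an idempotent satisfying $f\leq e$. The analogous construction $f = g\circ e$ handles the right-ideal version.

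For the harder direction of (2), namely that a minimal $e$ must lie in some minimal left ideal, apply abundance to the left ideal $S\circ e$: it contains a minimal left ideal $L$, and (1) produces an idempotent $f\in L$ with $f\leq e$, so minimality of $e$ forces $f = e\in L$. Conversely, if $e\in L$ for some minimal left ideal $L$, then $S\circ e\subseteq L$ is a non-empty left ideal, so $S\circ e = L$ by minimality; if $h\in E(S)$ satisfies $h\leq e$, then $h = h\circ e\in L$, hence $S\circ h = L$ by the same argument, so $e = t\circ h$ for some $t\in S$, and post-composing with $h$ yields $e = t\circ h = t\circ h\circ h = e\circ h = h$. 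The equivalence with $e\in K(S)$ is then just the definition of $K(S)$.

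Part (3) is now immediate: if $e$ is minimal, then by the argument above $e$ sits in a minimal left ideal that coincides with $S\circ e$; conversely, if $S\circ e$ is itself a minimal left ideal, then $e\in S\circ e$ places $e$ in a minimal left ideal and (2) returns minimality of $e$. Part (4) combines (1) with the forward direction of (2) applied to any minimal left ideal inside $S\circ e$ produced by abundance. The main obstacle is the construction in (1): the naive candidates $g$ and $g\circ e$ only yield $f\circ e = f$ and not the symmetric $e\circ f = f$, so one really needs the asymmetric product $e\circ g$, made to work by the small but decisive identity $g\circ e = g$ that comes from $g\in S\circ e$ together with $e\circ e = e$.
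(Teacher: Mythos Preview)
Your proof is correct and follows essentially the same approach as the paper's: the paper likewise picks an idempotent $i\in L$ (your $g$), uses $i=s\circ e$ to obtain $i\circ e=i$, defines $f=e\circ i$, and verifies the same three identities to get $f\leq e$; parts (2)--(4) are then deduced in the same way you describe. Your closing remark about why $e\circ g$ (rather than $g$ or $g\circ e$) is the correct candidate is a nice clarification, but the underlying argument is identical.
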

\begin{proof}
We prove this theorem for left ideal $ L $ of semigroup $ S $. By symmetry, the same is true for right ideal. 

(1). Since $ L $  is minimal left (or right) ideal, so there an idempotent in $ L$.  Since $ S $ is abundant, so there is an $ i \in L \cap E(S) $. Since $ i \in L \subseteq S\circ e $, so $ i= s\circ e $  for some $ s \in S $. Now 
$$
 i\circ e =s \circ e \circ e =s\circ e =i. 
$$
 Let $ e \circ i = f $. We will show that $ f$ is our required element. First of all $ f \in L $ because $ L $ is a left ideal. Secondly, $ f $ is an idempotent because 
 $$ 
 f\circ f = e \circ i \circ e \circ i = e \circ i \circ i =e \circ i =f .
 $$  
Finally, $ f $ is an minimal idempotent because 
 $$ 
 e \circ f =e \circ  e \circ i = e \circ i =f 
 $$ 
 and 
 $$ f \circ e =e \circ  i \circ e = e \circ i =f. 
 $$ 

(2). Let us suppose that $ e $ is a minimal idempotent of $ S $.  Let us make a set 
$$
\mathcal{L} = \{L \subseteq S: L \; \text{is a minimal left ideal of} \; S \}
$$
Since $ S $ is abundant, so there is $ L\in \mathcal{L} $ such that $ L \subseteq S \circ e $. By (1), there is $ f \in L \cap E(S) $ such that $ f \leq e $. Since $ e$ is a minimal idempotent, we must have $ f =e $. Therefore, $ e \in L $. Conversely assume that $ e \in L $ for some minimal left ideal $ L \in \mathcal{L}$. We have to show that $ e $ is an minimal idempotent. Let $ h \in E(S) $ with $ h \leq e $. Then $ S\circ h \subseteq S \circ e $. By minimality of $ L $, we can write 
$$
 L = S\circ h = S \circ e. 
 $$
So $ e = g \circ h $ for some $ g \in S $. By $ h \leq e $, we can write $ h= e \circ h $ and so 
$$
 h =e \circ h = g\circ h \circ h= g \circ h =e. 
 $$ 
This proves $ e $ is a minimal idempotent. 

(3). This follows from the observation of converse part of (b) that if $ e \in L $ and $ L $ is minimal left ideal, then $ L =S\circ e $. 

(4). By abundancy of $ S $  and (a), there is a minimal left ideal such that $ L \subseteq S \circ e $ and $ f \in L \cap E(S) $ such that $ f \leq e $. Then $ f $ is minimal by (b). 
\end{proof}

From the proof of this theorem, it is not hard to conclude that minimal idempotents of abundant holomorphic semigroup $ S $ are just those in $ K(S) $, that is, they are from $ E(S) \cap K(S) $. This Theorem \ref{2.4} is also an important source of alternative definition of minimal idempotent. That is, on the basis of this Theorem \ref{2.4} (2), minimal idempotent can also be defined as follows. \textit{An idempotent $ e $ on semigroup $ S $ is minimal if it belongs to the minimal left (or right) ideal}. Also, from the same Theorem \ref{2.4} (3), we can say that minimal left (or right) ideal always is of the form $ S \circ e $ (or $ e \circ S $), where $ e $ is a minimal idempotent in $ S $. 

\section{On filters and ultrafilters of holomorphic semigroups}
Now, we are interested to investigate more rigorous examples of compact topological holomorphic  semigroups (that is,  abundant semigroups).  First,  we define the following notion of filters and ultrafilters:

\begin{dfn}[\textbf{Filter and ultrafilter of a holomorphic semigroup}]
Let $ S $ be a holomrphic semigroup.  A \textit{filter} $ \mathscr{F} $ on $ S $ is a family of non-empty subsets of $ S $ with the following properties: 
\begin{enumerate}
\item $ \emptyset \notin \mathscr{F} $. 
\item $ A \in \mathscr{F} $ and $ A \subset B  \Longrightarrow B \in \mathscr{F}$.
\item $ A, B \in \mathscr{F}\Longrightarrow A \cap B \in \mathscr{F} $. 
\end{enumerate}
A filter $ \mathscr{F} $ is an \textit{ultrafilter} (or \emph{maximal filter}, or \emph{prime filter}) if there is $ n \in \mathbb{N} $ 
such that 
$$
S = A_{1} \cup A_{2} \cup \ldots \cup A_{n}
$$
for some $ i $ in $ 1 \leq i \leq n $ and $ A_{i} \in \mathscr{F}$. 
The semigroup $ S $ together with the (ultra) filter $ \mathscr{F} $ is called a semigroup (ultra) filtered by $ \mathscr{F} $ or an (\emph{ultra}) \emph{filtered semigroup}. 
\end{dfn}
The condition stated for ultrafilter implies that there is no filter on $ S $ which is strictly finer than $ \mathscr{F} $. In other words, $ \mathscr{F} $ is an ultrafilter if and only if for every two disjoint subsets $ A, B \in S $ such that $ A \cup B \in \mathscr{F} $, then either $ A \in \mathscr{F} $ or $ B \in \mathscr{F} $. Equivalently, if $ \mathscr{F} $ ultrafilter and $ E \subset S $, then either $ E \in \mathscr{F} $ or $ S -E \in \mathscr{F} $. 
From the conditions (1) and (3), we can say that an (ultra) filter $ \mathscr{F} $ satisfies the \textit{finite intersection property}, that is, \textit{intersection of any finite number of sets of the family is non-empty}.  From this facts, we can say that every filter is extendible to an ultrafilter.  Not only filters but any family of subsets of $ S $ can be extendibe to ultrafilters if and only if this family has finite intersection proporty. Also, every non-empty subsets of $ S $ is contained in an ultrafilter. The space of all ultrafilters on a semigroup $ S $ is denoted by $ \beta S $ which is a  \textit{Stone-Cech compactification} of $ S $. The space $ \beta S $ is quite large in the sense that its cardinality is equal to the cardinality of $ \mathscr{P}(\mathscr{P}(S)) $ where $ \mathscr{P}(S) $ represets power set of $ S $ and indeed, it is extremely large to be metrizable of $ \beta S $. Filter and ultrafilter are topological terms and most powerful tools in topology and set theory, but we have adopted these terms from \cite{berg1} where it is   defined as  an extension set of the additive  semigroup $ \mathbb{N} $.  

Note that  each ultrafilter $ \mathscr{F} \in \beta S $ can be identified with finitely additive $ \{0, 1\} $-valued probability measure $ \mu_{\mathscr{F}} $ on the power set $ \mathscr{P} (S) $ by
$$
\mu_{\mathscr{F}}(A) = 1
$$
if and only if $A \in \mathscr{F}$. 
It is obvious that 
$$
\mu_{\mathscr{F}}(\emptyset) = 0\;\;\; \text{and} \;\;\; \mu_{\mathscr{F}}(S) = 1.  
$$ 
With this notion, every $ f \in S $ is naturally identified with an ultrafilter by defining 
\begin{equation*}
\mu_{f}(A) =  
\begin{cases}
1 & \text{if}\;\;\; f \in A, \\
0 & \text{if}\;\;\;  f \notin A
\end{cases}
\end{equation*}
This is nothing other than the set $ \{A \subset S : f \in A\}$. Such type ultrafilters  $ \mu_{f}, \; f \in S $ are called \textit{fixed} or \textit{principal}. That is, semigroup $ S $ itself can be identified as a subset of all principal untrafilters from $ \beta S $.  Ultrafilters from the subset of $ \beta S - S $ are called \textit{free} or \textit{non-principal}. Note that, by assumption, $ S $ is an infinite semigroup, there are ultrafilters on $ S $ which don't have the form $ \mu_{f}, \; f \in S $. In other words, the canonical embedding $ f \to  \mu_{f}, \; f \in S $ is not a bijection.  That is, for such a semigroup, an ultrafilter $ \mathscr{F} $ is free if and only if every set in  $ \mathscr{F} $ is infinite. Note that a filter $\mathscr{F}  $ on $ S $ is free if the intersection of all sets in $ \mathscr{F} $ is empty, otherwise, $ \mathscr{F} $ is fixed by any element of the intersection of sets of $ \mathscr{F} $. We can define cluster point of a filter $ \mathscr{F} $ and on the basis of this term,  we can get equivalent principal and free ultrafilter. If point $ l $ is in all sets of a filter $ \mathscr{F} $,  then it is known as \textit{cluster point}.  If cluster point is in ultrafilter, then it is set of all set containing that point, which is nothing other than principal ultrafilter. Clearly, an ultrafilter can have at most one cluster point. Clearly, an ultrafilter wih no cluster point is free. For every $ f, g \in S $, we have 
$$
\mu_{f} \circ \mu_{g}  =\mu_{f \circ g}.
$$ 
This shows that principal ultrafilters forms a semigroup which is isomorphic to $ S $. 

\begin{exam}
Let $ E \neq \emptyset$ be a subset of a holomorphic semigroup $ S $. The set
$$
\mathscr{F} = \{F \subseteq S: E \subseteq F \} =\langle E \rangle 
$$
is a principal filter on $ S $ generated by $ E $. It is indeed, the least filter on $  S $ containing $ E $. If $ E =\{f\} $ for some $ f \in S $, then $ \mathscr{F} $ is an ultrafilter on $ S $. 
\end{exam}

\begin{exam}
Let $ S $ be a holomorphic semigroup. Define 
$$
\mathscr{F} = \{E\subset S : S - E \; \text{is finite}\}. 
$$
It is easy to verify that $ \mathscr{F} $ is generated by the family 
$$
\{S-\{f\}: f \in S\}
$$ 
forms a filter, the Frechet filter on $ S $. It is not an ultrafilter. 
\end{exam}

Next, we make $ \beta S $ a topological space which is, in fact,  compact and Hausdorff as shown in the following theorem. 
\begin{theorem} [\textbf{$ \beta S $} as a compact Housdorff space]\label{2.2}
Let $ S $ be a holomorphic semigroup.  The space $ \beta S $ of all ultrafilters of $ S $ is a topological space. In fact, it is a compact Hausdorff space. 
\end{theorem}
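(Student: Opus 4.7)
The plan is to equip $\beta S$ with the topology generated by the collection
\[
\widehat{A} = \{\mathscr{F} \in \beta S : A \in \mathscr{F}\}, \qquad A \subseteq S.
\]
First I would verify that $\{\widehat{A} : A \subseteq S\}$ is a basis. Condition (2) in the definition of a filter gives $\widehat{S} = \beta S$, while condition (3), closure under finite intersections, yields $\widehat{A} \cap \widehat{B} = \widehat{A \cap B}$, so the intersection of two basic sets is again basic. As an immediate bonus, each $\widehat{A}$ is clopen: the ultrafilter dichotomy ($E \in \mathscr{F}$ or $S - E \in \mathscr{F}$) recorded right after the definition of ultrafilter implies $\beta S \setminus \widehat{A} = \widehat{S - A}$.

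For the Hausdorff property, given distinct $\mathscr{F}, \mathscr{G} \in \beta S$, I would pick $A \subseteq S$ with $A \in \mathscr{F}$ and $A \notin \mathscr{G}$. The dichotomy forces $S - A \in \mathscr{G}$, and the basic sets $\widehat{A}$ and $\widehat{S - A}$ separate $\mathscr{F}$ from $\mathscr{G}$; they are disjoint since any common ultrafilter would contain $A \cap (S - A) = \emptyset$, violating condition (1).

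The main obstacle is compactness, for which I would run a finite-intersection-property argument resting on Zorn's lemma, the same set-theoretic engine already invoked for Theorem \ref{th2.1}. Suppose toward contradiction that $\{\widehat{A_i}\}_{i \in I}$ is a cover of $\beta S$ by basic open sets with no finite subcover. For each finite $F \subseteq I$, the failure of $F$ to cover produces some $\mathscr{H}_F \in \beta S$ with $A_i \notin \mathscr{H}_F$ for all $i \in F$; the dichotomy gives $S - A_i \in \mathscr{H}_F$, whence $\bigcap_{i \in F}(S - A_i) \neq \emptyset$ by condition (3) and (1). Thus $\{S - A_i\}_{i \in I}$ has the finite intersection property. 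The extension principle emphasized earlier in the section, that every family with the finite intersection property is contained in some ultrafilter, then produces $\mathscr{G} \in \beta S$ with $S - A_i \in \mathscr{G}$ for every $i \in I$; equivalently $A_i \notin \mathscr{G}$ for every $i$, so $\mathscr{G}$ lies in no $\widehat{A_i}$, contradicting the cover hypothesis. Since any open cover of $\beta S$ refines to a cover by basic sets, this gives compactness.

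The delicate step is precisely this last extension of a finite-intersection family to an ultrafilter; this is where the set-theoretic weight is paid, via a standard Zorn's lemma argument applied to the poset of filters containing $\{S - A_i\}_{i \in I}$ ordered by inclusion. The topology, the Hausdorff separation, and the clopen basis are essentially free once the filter and ultrafilter axioms are unpacked.
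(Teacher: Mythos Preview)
Your proposal is correct and uses the same topology as the paper: both take the sets $\widehat{A}=\{\mathscr{F}\in\beta S: A\in\mathscr{F}\}$ (the paper writes $\overline{A}$) as a basis, record the identities $\widehat{A\cap B}=\widehat{A}\cap\widehat{B}$ and $\widehat{S\setminus A}=\beta S\setminus\widehat{A}$, and use the latter clopen property for Hausdorff separation.

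The genuine difference is in the compactness step. The paper's argument is little more than a gesture (``each element $T$ of the topology is covered by finite open sets and so $\beta S$ is also covered by finite open sets''), which does not actually engage with why an arbitrary basic cover admits a finite subcover. Your finite-intersection-property argument---pass from a basic cover with no finite subcover to the family of complements, observe this family has the FIP, extend to an ultrafilter via Zorn, and derive a contradiction---is the standard rigorous route and is strictly more complete than what the paper offers. Your explicit Hausdorff separation via $\widehat{A}$ and $\widehat{S\setminus A}$ is likewise sharper than the paper's somewhat vague appeal to disjoint unions of basic sets. In short: same framework, but your compactness proof supplies the substance that the paper's sketch leaves out.
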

\begin{proof}
For given a set $ A \subset S $, define 
$$ \overline{A} = \{\mathscr{F} \in \beta S: A \in \mathscr{F} \}\;\;\;\;\; \text{and} \;\;\;\;\;  \mathscr{B} =\{\overline{A} : A \subseteq S\}.  
 $$
Then, for any $ A, B \subset S $,  the following  follows easily.
 $$
 \overline{A\cup B} = \overline{A} \cup \overline{B} \;\;\;  \overline{A\cap B} = \overline{A} \cap \overline{B} \;\;\; \text{and}\;\;\; \overline{S-A} = \beta S - \overline{A}. 
 $$ 
 Since $ S = \bigcup A $ for all $ A \subset S $. So 
 $$ 
 \overline{S} =\overline{\bigcup_{A\subset S} A} = \bigcup_{\overline{A} \in \mathscr{B}} \overline{A}  =\beta S.  
 $$ 
 This proves that $ \mathscr{B} $ is a basis for the topology on $ \beta S $ where topology $ \tau $ is defined by  the collection of all unions of elements of  the basis $ \mathscr{B} $.  Let $T \in \tau  $.  Then,  there are some  $ \overline{A} \in \mathscr{B}  $ such that $ T = \bigcup{\overline{A}} $ where $ \overline{A} $ is an open set. We can consider $ \tau $ an open cover of $ \beta S $. Each element $ T $ of the topology $ \tau $ is covered by finite open sets and so $ \beta S \in \tau $ is also cover by finite open sets $ \overline{A} \in\mathscr{B} $. This proves $ \beta S $ is a compact space.
 
 To make $ \beta S $ a Hausdorff space,  let us suppose a pair of distinct ultrafilters $ \mathscr{E}, \mathscr{F} \in \beta S  $.  Since by above construction, every element of basis $ \mathscr{B} $ is both open and closed in $ \beta S $. So, there exist open sets  $ T_{1} $ and $ T_{2} $   containing $ \mathscr{E}$ and  $\mathscr{F} $ respectively where  $ T_{1} = \bigcup{\overline{A_{i}}} $ for some $ i\leq n $ and $ T_{2} = \bigcup{\overline{A_{j}}} $ for some $ j \leq n $ with $ i \neq j $  such that $ T_{1}\cap T_{2}  =\emptyset$. 
\end{proof}
 
In the proof of Theorem \ref{2.2}, we considered the set  $\mathscr{B} =\{\overline{A} : A \subset S\}$  where $\overline{A} = \{\mathscr{F} \in \beta S: A \in \mathscr{F} \}$ as a basis of the topology $ \tau $ on $ \beta S $.  In this case, we say that $ \overline{A}$ is the \textit{Stone set} corresponding to $ A\subseteq S $, $ \mathscr{B} $ is the \textit{Stone base} or simply, \textit{(ultra)filter base} of $ \beta S $ and $ \tau $ is the \textit{Stone topology} and the space $ \beta S $ is the \emph{Stone-Cech compactification} of semigroup $ S $.   Two (ultra) filter bases  are said to be equivalent if they generate the same filter. Any family of sets satisfying the finite intersection property is a subbasis of for a (ultra) filter $ \mathscr{F} $ since the family together with the finite intersection of its members is a (ultra) filter base.  
 
The space $ \beta S $ with basis $ \mathscr{B} $ is a bonafide example of compact Hausdorff space which is isomorphic to $ \overline{S} = \{\mathscr{F} \in \beta S: S \in \mathscr{F} \}$. So, it seems natural to extend the operation of functional composition from semigroup $ S $ to compact Hausdorff  space $ \beta S $  as follows. 
In this sense,  $ \beta S $ can be  a good candidate for compact holomorphic topological semigroup.  According to Definition \ref{2.4}, to make $ \beta S $ a compact topological holomorphic semigroup,  we need to define binary operation and extension map as a translation map on $ \beta S $. 

\begin{dfn}[\textbf{Operation on $ \beta S $}]\label{2.6}
Let $ \beta S $ be the space of ultrafilters of holomrphic semigroup $ S $. For any $ A \subset S $ and $ f \in S $. Define a set $ A \circ f =\{g \in S: g \circ f \in A \}$.  For any $ \mathscr{E}, \mathscr{F} \in \beta S $, define convolution by
$$
\mathscr{E} \star \mathscr{F} =\{A \subseteq S: \{f \in S: A\circ f \in \mathscr{E} \} \in \mathscr{F} \} 
$$
\end{dfn}
The operation $ \star $ on $\beta S  $ is well defined and associative but not commutative. In fact, we prove the following result.

\begin{theorem}[\textbf{$ \beta S $ is a semigroup}]\label{2.13}
For any $ \mathscr{E}, \mathscr{F} \in \beta S $, $ \mathscr{E} \star \mathscr{F}  $ is an  ultrafilter  and $ \star $ on $\beta S  $ is associative. 
\end{theorem}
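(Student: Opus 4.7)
The plan is to verify the defining conditions of an ultrafilter for $\mathscr{E}\star\mathscr{F}$ using the elementary behaviour of the operation $A \mapsto A\circ f$, and then to deduce associativity of $\star$ from associativity of functional composition in $S$. Before anything else, I would record the following immediate consequences of the definition $A\circ f = \{g\in S : g\circ f \in A\}$: $\emptyset\circ f = \emptyset$, $S\circ f = S$, $(A\cap B)\circ f = (A\circ f)\cap (B\circ f)$, $(S-A)\circ f = S-(A\circ f)$, and $A\subseteq B \Rightarrow A\circ f \subseteq B\circ f$. These five identities are the workhorses for every subsequent step and each is a one-line check from the defining formula.

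To show $\mathscr{E}\star\mathscr{F}$ is a filter, I would verify conditions (1)--(3) of the filter definition in turn. Since $\emptyset\circ f = \emptyset \notin \mathscr{E}$ for every $f$, the set $\{f : \emptyset\circ f \in \mathscr{E}\}$ is empty and hence not in $\mathscr{F}$, giving (1). Closure under supersets follows at once from $A\circ f \subseteq B\circ f$ combined with the corresponding closure in $\mathscr{F}$. For closure under binary intersections, I would rewrite $\{f : (A\cap B)\circ f \in \mathscr{E}\}$ as $\{f : A\circ f\in\mathscr{E}\}\cap\{f : B\circ f\in\mathscr{E}\}$ using $(A\cap B)\circ f = (A\circ f)\cap(B\circ f)$ together with the filter property of $\mathscr{E}$; the resulting set is the intersection of two members of $\mathscr{F}$ and hence lies in $\mathscr{F}$.

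To upgrade the filter to an ultrafilter, I would invoke maximality of $\mathscr{E}$: for every $f \in S$ and every $A\subseteq S$, exactly one of $A\circ f$ and $(S-A)\circ f = S-(A\circ f)$ lies in $\mathscr{E}$. Hence the sets $\{f : A\circ f \in\mathscr{E}\}$ and $\{f : (S-A)\circ f \in\mathscr{E}\}$ partition $S$, and maximality of $\mathscr{F}$ forces exactly one of them to lie in $\mathscr{F}$; equivalently, exactly one of $A$ and $S-A$ lies in $\mathscr{E}\star\mathscr{F}$, which is the characterisation of an ultrafilter given in the paper.

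Finally, for associativity I would unwind both $(\mathscr{E}\star\mathscr{F})\star\mathscr{G}$ and $\mathscr{E}\star(\mathscr{F}\star\mathscr{G})$ directly from Definition \ref{2.6}. The key identity is $(A\circ h)\circ g = A\circ (g\circ h)$, which itself follows from associativity of composition in $S$: indeed $k \in (A\circ h)\circ g$ iff $(k\circ g)\circ h \in A$ iff $k\circ (g\circ h) \in A$. Using this identity twice shows that both triple products reduce to the same nested membership condition $\{h : \{g : A\circ(g\circ h)\in\mathscr{E}\}\in\mathscr{F}\}\in\mathscr{G}$, whence the two ultrafilters coincide. The main obstacle throughout is purely notational: keeping track of the three layers of nested ultrafilter quantifiers and not confusing $A\circ f$ (a subset of $S$) with a composition of functions. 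Once the five identities above are established, every step is a short symbolic manipulation.
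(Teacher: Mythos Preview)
Your proposal is correct and follows essentially the same route as the paper: verify the filter axioms for $\mathscr{E}\star\mathscr{F}$ directly, use the complement identity $(S-A)\circ f = S-(A\circ f)$ to obtain maximality, and unwind the definitions for associativity via $(A\circ h)\circ g = A\circ(g\circ h)$. Your write-up is in fact more complete than the paper's, since you record the elementary identities for $A\mapsto A\circ f$ at the outset and explicitly verify upward closure (which the paper's proof omits); one small remark is that the superset step uses upward closure in \emph{both} $\mathscr{E}$ and $\mathscr{F}$, not just $\mathscr{F}$.
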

\begin{proof}
From Definition \ref{2.6}, it is clear that $ \emptyset \notin \mathscr{E} \star \mathscr{F} $. Let $ A, B \in \mathscr{E} \star \mathscr{F} $. Then again by the definition \ref{2.6}, we have
$$
\{f \in S: A\circ f \in \mathscr{E} \} \in \mathscr{F}\;\; \text{and}\;\;\{f \in S: B\circ f \in \mathscr{E} \} \in \mathscr{F}
$$
 To show $ A \cap B \in  \mathscr{E} \star \mathscr{F}$, we need to show 
 $$
 \{f \in S: (A\cap B)\circ f \in \mathscr{E} \} \in \mathscr{F}. 
$$ 
But this happens easily as 
 $$
 \{f \in S: (A\cap B)\circ f \in \mathscr{E} \} = \{f \in S: A\circ f \in \mathscr{E} \} \cap \{f \in S: B\circ f \in \mathscr{E} \}  \in \mathscr{F}. 
 $$
To show $\mathscr{E} \star \mathscr{F} \neq \emptyset$, assume $ A \subset S $ but  $ A\notin \mathscr{E} \star \mathscr{F} $.  Then $\{f \in S: B\circ f \in \mathscr{E} \}  \notin \mathscr{F}   $. This is equivalent with $\{f \in S: A\circ f \in \mathscr{E} \}^{c}  \in \mathscr{F}$. This is true precisely when $\{f \in S: A^{c}\circ f \in \mathscr{E} \}  \in \mathscr{F} $. This shows that $ A^{c}\in \mathscr{E} \star \mathscr{F}  $.

Finally, we check the associativity of the operation  $ * $ on $ \beta S $. Let $ A \subset S $ and $ \mathscr{E}, \mathscr{F}, \mathscr{G} \in \beta S $. Then
\begin{eqnarray*}
A \in  \mathscr{E} \star (\mathscr{F}\star \mathscr{G}) 
&  \iff  &  \{f \in S: A\circ f \in \mathscr{E} \} \in \mathscr{F}\circ \mathscr{G}\\
& \iff & \{g \in S : (\{f \in S: A\circ f \in \mathscr{E} \} \circ g) \in \mathscr{F}\} \in \mathscr{G}\\
& \iff &  \{g \in S : \{f \in S: A\circ g\circ f \in \mathscr{E} \}  \in \mathscr{F}\} \in \mathscr{G}\\
& \iff & \{g \in S : A\circ g \in \mathscr{E} \star \mathscr{F} \} \in \mathscr{G}\\
& \iff  &  A \in  (\mathscr{E} \star \mathscr{F})\star \mathscr{G} \\
\end{eqnarray*}
\end{proof}

As we stated before, if ultrafilters $ \mu_{f}$ for $f \in S $ are principal, then the convolution operation $ * $ on $ \beta S $ correspond to the operation $ \circ $ of holomorphic semigroup $ S $.  
Note that the convolution defined in  Definition \ref{2.6} is the unique extension of the operation $ \circ $ in $ S $. 
This leads to the translation map of  Definition \ref{2.4} if we define it as $ F_{\mathscr{F}} (\mathscr{E}) = \mathscr{E} \star \mathscr{F}$. Indeed, this map is continuous as shown below.
\begin{theorem} [\textbf{The convolution operation as a continuous self map}]
For a fixed $ \mathscr{E} \in \beta S $, the function $ F_{\mathscr{F}} (\mathscr{E}) = \mathscr{E} \star \mathscr{F}$ is continuous self map of $ \beta S $ for any $ \mathscr{F} \in \beta S $.
\end{theorem}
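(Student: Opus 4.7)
The plan is to verify continuity by a direct preimage calculation on the Stone basis. From the proof of Theorem \ref{2.2} the topology $\tau$ on $\beta S$ is generated by the basis $\mathscr{B} = \{\overline{A} : A \subseteq S\}$, where $\overline{A} = \{\mathscr{G} \in \beta S : A \in \mathscr{G}\}$. Since $\mathscr{B}$ generates $\tau$, continuity of the self-map $\mathscr{F} \mapsto \mathscr{E} \star \mathscr{F}$ (with $\mathscr{E}$ fixed as in the statement) will follow at once from the fact that the preimage of each $\overline{A}$ is itself a basic open set.

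The second step is to unfold Definition \ref{2.6} directly. For any $\mathscr{F} \in \beta S$, membership $\mathscr{E} \star \mathscr{F} \in \overline{A}$ is equivalent to $A \in \mathscr{E} \star \mathscr{F}$, which by definition of the convolution is equivalent to
$$\{f \in S : A \circ f \in \mathscr{E}\} \in \mathscr{F}.$$
The crucial observation is that the set
$$B \; := \; \{f \in S : A \circ f \in \mathscr{E}\}$$
is built only from the fixed data $A$ and $\mathscr{E}$, so it is a well-defined subset of $S$ that does \emph{not} depend on $\mathscr{F}$. Consequently the preimage of $\overline{A}$ is exactly
$$F_{\mathscr{F}}^{-1}(\overline{A}) \; = \; \{\mathscr{F} \in \beta S : B \in \mathscr{F}\} \; = \; \overline{B},$$
which is a member of $\mathscr{B}$ and hence open (indeed clopen), yielding continuity.

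I do not expect a substantial obstacle: the whole argument is a one-line unfolding of the convolution, once the bookkeeping is pinned down. The only conceptual subtlety is to recognise that the inner quantifier $\{f \in S : A \circ f \in \mathscr{E}\}$ depends only on $A$ and the fixed $\mathscr{E}$; this is precisely what collapses the preimage into a single Stone set $\overline{B}$, and it is this fact that equips $(\beta S, \star)$ with the continuous translation maps required by Definition \ref{2.4}, so that the earlier structural results of Section 2 (existence of idempotents, minimal ideals, abundancy) apply to $\beta S$.
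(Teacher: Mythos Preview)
Your proof is correct and follows essentially the same route as the paper: both arguments hinge on the observation that for fixed $\mathscr{E}$ the set $B=\{f\in S: A\circ f\in\mathscr{E}\}$ depends only on $A$ and $\mathscr{E}$, so that the preimage of the basic open set $\overline{A}$ under $\mathscr{F}\mapsto\mathscr{E}\star\mathscr{F}$ is the basic open set $\overline{B}$. The paper phrases this as a local neighborhood chase (choose $\overline{A}\ni\mathscr{E}\star\mathscr{F}$, set $\mathcal{V}=B$, and verify $\overline{\mathcal{V}}$ works), whereas you compute the full preimage at once; the content is identical.
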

\begin{proof}
For $ \mathscr{F} \in \beta S $, let $ \mathcal{U} $ be a neighborhood of $ F_{\mathscr{F}} (\mathscr{E}) $. The result can be proved if there exists a neighborhood $ \overline{\mathcal{V}}  $ of $ \mathscr{E} $ such that $ F_{\mathscr{F}} (\mathscr{G}) \in \mathcal{U}$ for any $ \mathscr{G} \in \overline{\mathcal{V}}$ where $\overline{\mathcal{V}}=\{\mathscr{H} \in \beta S: \mathcal{V}  \in \mathscr{H} \} $. Let $ A \subset S $ such that $ F_{\mathscr{F}} (\mathscr{E}) = \mathscr{E} \star \mathscr{F} \in \overline{A} \subset \mathcal{U}$. Then $A \in \mathscr{E} \star \mathscr{F}  $. Define a set $ \mathcal{V} =\{f \in S: A \circ f \in \mathscr{E}\} $. By  Definition \ref{2.6} of $ \mathscr{E} \star \mathscr{F}  $, $ \mathcal{V} \in \mathscr{F} $ or $ \mathscr{F} \in \overline{\mathcal{V}} $. If $ \mathscr{G} \in \overline{\mathcal{V}}  $, then $ \mathcal{V} =\{f \in S: A \circ f \in \mathscr{F}\} \in \mathscr{G} $. This implies that $ A \in \mathscr{F} \star \mathscr{G} = F_{\mathscr{F}}(\mathscr{G}) $ or $ F_{\mathscr{F}}(\mathscr{G}) \in \overline{A} \in \mathcal{U} $. 
\end{proof}

Now, we are able to get very rigorous example of compact holomorphic topological semigroup. This is a space of ultrafilters $ \beta S $ with operation of convolution of Definition \ref{2.6}. Indeed, this space $ \beta S $ is a compact holomorphic topological semigroup on the basis of Theorems \ref{2.2} and \ref{2.13}. It is considered a nice characteristic property of the space $ \beta S $.     
As we mentioned in  Theorem \ref{th2.1}, such semigroups are known to have idempotents.  An idempotent (in this case, it is called \textit{idempotent ultrafilter}) in $ \beta S $ is an element $ \mathscr{I} $ such that $ \mathscr{I} \star \mathscr{I} =\mathscr{I} $.  In such a case, we have $ A \in \mathscr{I} \iff A \in  \mathscr{I} \star \mathscr{I} \iff \{f \in S: A \circ f \in \mathscr{I}\} \in \mathscr{I}$. Also if $ \mathscr{I}  $ is an idempotent ultrafilter, then $ f \circ S \in \mathscr{I} $ for all $ f \in S $. Note that an idempotent ultrafilter can not be principal ultrafilter.  
There is an investigation concerning the number of idempotent ultrafilters contained in $ \beta S $. In fact, $ \beta S $ has $ 2^{c} $ idempotants, where $ c $ is the power of continuum. This follows from the fact that $ \beta S $ has $ 2^{c} $ disjoint closed subsemigroups (see for instance \cite{dou}).

From the proof of Theorem \ref{th2.1} and little bit discussion of above paragraph,  we noticed that a compact holomorphic topological semigroup $ S $ has minimal close subsemigroup $ M $. Now, we investigate something more on minimal close subsemigroup of $ \beta S $. 

\begin{theorem}[\textbf{Mnimal closed subsemigroups are left(or right) ideals}]\label{2.8}
The minimal close subsemigroups of $ \beta S $ are precisely minimal left (or right) ideals.  
\end{theorem}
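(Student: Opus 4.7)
The plan is to prove both inclusions: every minimal left ideal of $\beta S$ is a minimal closed subsemigroup, and conversely, every minimal closed subsemigroup is a minimal left ideal. Throughout I exploit the fact that, by Theorem~\ref{2.13} and the continuity of each $F_{\mathscr{F}}$, the space $\beta S$ is a compact right topological semigroup and so is every one of its closed subsemigroups.

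For the forward direction, I would take a minimal left ideal $L$ of $\beta S$ and first verify that it is a closed subsemigroup. Closedness follows from item~(3) of the list recorded after Theorem~\ref{th2.1}: writing $L = \beta S \star \mathscr{F}$ for any $\mathscr{F} \in L$ realises $L$ as the image of the compact space $\beta S$ under the continuous translation $F_{\mathscr{F}}$. The subsemigroup property is immediate, since $\mathscr{E},\mathscr{F}\in L$ forces $\mathscr{E}\star\mathscr{F}\in\beta S\star\mathscr{F}\subseteq L$. For the minimality part, I would let $N\subseteq L$ be any closed subsemigroup, pick $\mathscr{F}\in N$, and observe that $\beta S\star\mathscr{F}$ is a closed left ideal of $\beta S$ sitting inside $L$; the minimality of $L$ forces $\beta S\star\mathscr{F}=L$. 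To push from this to $N = L$, I would apply the idempotent-extraction argument from the proof of Theorem~\ref{th2.1} to the compact right topological semigroup $N$ itself, obtaining an idempotent $e\in N$ with $N\star e = N$, and then reconcile this with $\beta S\star e = L$ to conclude $N = L$.

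For the converse direction, I would let $M$ be a minimal closed subsemigroup and invoke Fact~1 and Fact~2 from the proof of Theorem~\ref{th2.1} to pin down an idempotent $e\in M$ satisfying $M\star e = M$. I would then show $M = \beta S\star e$, which exhibits $M$ as a left ideal. Once this is in hand, any left ideal $L'\subseteq M$ contains some $\mathscr{F}$, and $\beta S\star\mathscr{F}\subseteq L'\subseteq M$ is a closed subsemigroup contained in $M$; the minimality of $M$ among closed subsemigroups forces $\beta S\star\mathscr{F}=M$, so $L' = M$ and $M$ is minimal as a left ideal.

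The main obstacle is the conceptual gap between the two minimality notions: a priori, closed subsemigroups form a strictly larger family than left ideals of $\beta S$, so neither form of minimality obviously implies the other. The crucial leverage comes from the idempotent $e$ extracted by Ellis-style compactness: in both directions, the translate $\beta S\star e$ is simultaneously a closed subsemigroup and a left ideal, and it is this common object that bridges ``minimal closed subsemigroup'' and ``minimal left ideal''. Making this rigorous requires careful use of the right topological structure (so that $F_e$ is continuous and $\beta S\star e$ is closed) together with the subsemigroup identity $M\star e = M$ inside the minimal piece.
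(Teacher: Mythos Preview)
Your forward direction contains a concrete gap. When you take a closed subsemigroup $N\subseteq L$ and ``apply the idempotent-extraction argument \ldots\ obtaining an idempotent $e\in N$ with $N\star e=N$'', the Ellis argument does not deliver $N\star e=N$: it only furnishes an idempotent inside $N$. The equality $M\star e=M$ in the proof of Theorem~\ref{th2.1} used the \emph{minimality} of $M$ among closed subsemigroups, which your $N$ is not assumed to have. Even granting $N\star e=N$, the subsequent ``reconciliation'' cannot succeed: for any idempotent $e$ lying in a minimal left ideal $L$ of $\beta S$, the singleton $\{e\}$ is itself a closed subsemigroup strictly contained in $L$ (strictly, because for infinite $S$ the set $\beta S\star e$ is never a single point). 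Thus a minimal left ideal of $\beta S$ is \emph{not} a minimal closed subsemigroup, and your forward direction is attempting to establish something that fails in the very setting under discussion.

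The same obstruction undermines your converse direction. Fact~2 in the proof of Theorem~\ref{th2.1} actually shows that every minimal closed subsemigroup $M$ is a singleton $\{e\}$ with $e$ idempotent; your promised identification $M=\beta S\star e$ therefore cannot hold, since $\beta S\star e$ properly contains $\{e\}$. For comparison, the paper's own argument is much shorter than yours: it simply quotes Fact~2 to write $M=\{\mathscr I\}$ and then asserts that this singleton is a minimal left ideal. That assertion suffers from the same defect, because $\{\mathscr I\}$ is a left ideal of $\beta S$ only when $\mathscr I$ is a right zero, which idempotent ultrafilters on an infinite $S$ are not. The underlying issue is that the statement, read literally, identifies two genuinely different families in $\beta S$: minimal closed subsemigroups are singleton idempotents, whereas minimal left ideals are the much larger sets $\beta S\star e$ for minimal idempotents $e$.
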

\begin{proof}
Suppose  that $ \mathcal{I} $ is a minimal close subsemigroup of $ \beta S $. 
Then by fact 2 in the proof of the theorem \ref{th2.1}, we can write $ \mathcal{I} =\{\mathscr{I} \} $, where $\mathscr{I}  $ is an idempotent ultrafilter of $ \beta S $. Which is a minimal (minimum as well) left (or right) ideal of $ \beta S $. 
\end{proof}

This Theorem \ref{2.8} is also a converse of the fact that every left (or right) ideal is a subsemigroup of the semigroup $ S $. Note that in general such type of converse may not hold but according to Theorem \ref{2.8}, if subsemigroups are closed and minimal, they are precisely minimal left (or right) ideals. Also, if semigroup is abundant, such type of close minimal subsemigroups exist and they are minimal left or right ideals.


\begin{thebibliography}{10}

\bibitem {berg1} V. Bergelson, \textit{Ergodic theory-an update}, Ergodic Theory of $ \mathbb{Z}^{d} $ action, London Math. Soc. Lecture Note Ser. \textbf{228} (1996), 1-61. 

\bibitem {berg2} V. Bergelson, \textit{Minimal idempotents and ergodic Ramsey theory}, Ergodic Theory of $ \mathbb{Z}^{d} $ action, Topics in dynamics and ergodic theory: London Math. Soc. Lecture Note Ser. \textbf{310} (2003), 8-39. 


\bibitem {dou} E. K. Douwen,  \textit{The Cech-Stone compactification of discrete groupoid}, Topology and it Applications, \textbf{39} (1991), 43-60. 




\end{thebibliography}
\end{document}